\newtheorem{thm}{Theorem}[section]
\newtheorem{lem}[thm]{Lemma}
\newtheorem{prop}[thm]{Proposition}
\numberwithin{equation}{section}
\renewcommand{\thefootnote}
\author { B\'echir Amri   }
\title{$L^p$-Convergence of  Fourier-Heckman-Opdam Expansions }
\date{ }
\begin{document}
 \maketitle
 \begin{center}
 Department of Mathematics, Faculty of Sciences. Taibah university 
Al-madinah Al-Munawarah. Arabia Saudi.\\
E-mail: bamria@taibahu.edu.sa, 
\end{center}
 \begin{abstract}
 
 We study the 
$L^p$-convergence of Fourier expansions in terms of non-symmetric Heckman–Opdam polynomials of type $A_1$. Using kernel estimates and duality arguments, we prove that the partial sums converge in $ L^p([-\pi,\pi],dm_k)$ 
for 
$$2-\frac{1}{k+1} < p < 2+\frac{1}{k}.$$
This extends classical $L^p$-convergence results for trigonometric Fourier series to the Heckman–Opdam setting.
\footnote{Key words: Heckman-Opdam polynomials, Orthogonal polynomials, Integral operators.  \\
2020 Mathematics Subject Classification: 33C52, 33C45, 42B10.}
 \end{abstract}
 \section{Introduction}
 Fourier analysis has long been a central tool in harmonic analysis and mathematical physics. In particular, the convergence of Fourier series in $L^p$ -spaces is a classical topic with deep connections to orthogonal polynomials and special functions \cite{MS, PP, P}. In recent decades, generalizations of classical Fourier analysis to settings associated with root systems and reflection groups have attracted considerable interest.
Heckman–Opdam polynomials provide a natural generalization of classical Jacobi polynomials and appear as eigenfunctions of Cherednik-type differential–difference operators \cite{O1, O2}. The non-symmetric Heckman–Opdam polynomials of type 
$A_1$  form an orthogonal basis of  $L^2([-\pi,\pi],dm_k)$,
$dm_k(x)=|\sin x|^{2k}dx$, and exhibit rich algebraic and analytic structures, including close relations with non-symmetric Jack polynomials.
In this work, we investigate the $L^p$-convergence of Fourier expansions with respect to the non-symmetric Heckman–Opdam polynomials. Specifically, for 
 $f\in  L^p([-\pi,\pi],dm_k)$ 
, we consider its Fourier–Heckman–Opdam expansion and study the convergence of its partial sums.  Following the approach developed in \cite{P}, and using integral kernel representations together with suitable estimates, we show that the partial sums converge to  $f$ for 
$$2-\frac{1}{k+1} < p < 2+\frac{1}{k}.$$
 
 \section{Non-symmmetric Heckmann-Opdam polynomials  of type $A_1$ }
  We begin by reviewing and presenting several results concerning the non-symmetric Heckman–Opdam polynomials of type $A_1$. For the general theory of Heckman–Opdam polynomials associated with arbitrary root systems, we refer to the standard literature \cite{O1,O2}.
\par Let $k\geq 0$, the Cherednick operator of type $A_1$ is given by
\begin{eqnarray}\label{T}
T^{k}(f)(x)=\dfrac{df}{dx}(x)+ 2k\;\frac{ f(x)-f(-x)}{1-e^{-2x}}-   k f(x), \qquad f\in C^1(\mathbb{R}).
 \end{eqnarray}
 We define the weight function
$$dm_k(x)=|\sin x |^{2k} dx  $$
and the inner product on $L^2([-\pi,\pi],dm_k(x))$
$$(f,g)_k=\int_{-\pi}^{\pi}f(x)\overline{g(x)}dm_k(x),$$
with its associate norm $\|.\|_{2,k}$. Introduce a partial ordering $\triangleleft$ on $\mathbb{Z}$ as follows:
\begin{equation*}
  j\triangleleft n  \Leftrightarrow \left\{
    \begin{array}{ll}
    |j| < |n|\;\text{and} \;|n|-|j|\in 2\mathbb{Z}^+   , & \hbox{} \\
    or   , & \hbox{} \\
      |j| = |n|\;\text{and} \; n <j . & \hbox{.}
    \end{array}
  \right.
\end{equation*}
The non-symmetric  Heckman-Opdam polynomials $E_n^k$, $n\in \mathbb{Z}$ are characterized by the following conditions:
\begin{eqnarray}\label{def}
 (a)&&    E_n^{k}(z)=\textrm{e}^{nz}+ \sum_{j\triangleleft n }c_{n,j}\; \textrm{e}^{jz},\qquad z\in \mathbb{C}\qquad\qquad\qquad\qquad\qquad\qquad\qquad\qquad
 \\ (b) && (E_n^{k}(\textrm{i}x),\textrm{e}^{\textrm{i}jx})_{k}=0,\quad \text{for any } \; j\triangleleft n.
\end{eqnarray}
  It follows from Corollary 5.2 and
Proposition 6.1 in \cite{Sah} that the coefficients  $c_{n,j}$  are all real and non-negative.
The  polynomials $E_n^k$ diagonalize simultaneously the  Cherednik operators. In particular, they satisfy 
\begin{equation}\label{TE}
  T^{k} ( E_n^{k})= n_k \; E_n^{k}, \qquad n\in \mathbb{Z},
\end{equation}
 where $ n_k=n+k $ if $n >0$ and  $n_k=n-k$ if $n\leq 0$.
We observe that,
\begin{equation}\label{13}
 E_0^k(x)=1,\quad \text{and}\quad E_1^k(x)=e^x.
\end{equation}
 Moreover, the family of trigonometric polynomials
$\{ E_n^{k}(ix), \;n\in \mathbb{Z}\}$    is an orthogonal basis of 
  $L^2([-\pi,\pi],dm_k(x))$.\\
 \par The Heckman-Opdam-Jacobi  polynomials  $P_n^k$, $n\in \mathbb{Z}_+$, are defined by
  $$P_n^{k}(x) =\frac{1}{2} \Big(E_n^{k}(x)+E_n^{k}(-x)\Big),$$
and they satisfy 
$$ (T^k)^2P_n^k=(n+k)^2 P_n^k.$$
In particular,  $P_n^k$ is an  eigenfunction of the operator  $L_k$, which coincides with the even part of $(T^k)^2$, given by 
$$L_k(f)=f''+2k\; \frac{1+e^{-2x}}{1-e^{-2x}}\;f'(x)+k^2f(x)$$
The Heckman-Opdam-Jacobi  polynomials are expressed via the hypergeometric function $_2F_1$
as
\begin{equation}\label{P}
P_n^k(x)=P_n^k(0)\;_2F_1(n+2k,-n,k+1/2,-\sinh^2(x/2))
\end{equation}
where, for all $n\in \mathbb{Z}^+$,
$$ P_n^k(0)=E_n^{k}(0)= \frac{\Gamma(k+1)\Gamma(n+2k)}{\Gamma(2k+1)\Gamma(n+k)}=\frac{\Gamma(k)\Gamma(n+2k)}{2\Gamma(2k)\Gamma(n+k)}.$$

They can also be written in terms of the ultraspherical (Gegenbauer) polynomials $C_n^k$ as
\begin{equation}\label{ultra}
P_n^k(ix)=\frac{P_n^k(0)}{C_n^k(1)} C_n^k(\cos x)= \frac{\Gamma(k+1)}{\Gamma(2k+1)}\frac{n!}{\Gamma(n+k)}C_n^k(\cos x).
\end{equation}
Here the polynomials $C_n^k$ $C_n^k$are defined via their generating function,
$$
\sum_{n=0}^{\infty} C_n^{k}(x)\, t^n
=
\frac{1}{(1 - 2 x t + t^2)^{\lambda}}, 
\qquad |t|<1.
$$
(See \cite{Sz}, for further properties of ultraspherical polynomials.)

  \par An important identity that can follows from  (\ref{P}) is
\begin{equation}\label{pn}
\frac{dP_n^k}{dx}(x)= 2 nP_{n-1}^{k+1}(x)\sinh x, \quad n\geq 1.
\end{equation} 
Further,  making use of the identities:
$$\frac{dE_n^k}{dx} (x)+2k \frac{E_n^k(x)-E_n^k(-x)}{1-e^{-2ix}}=(n+2k)E_n^k(x)$$
$$\frac{dE_n^k}{dx}-2k \frac{E_n^k(x)-E_n^k(-x)}{1-e^{2x}}=(n+2k)E_n^k(-x)$$
and subtracting the second  from the first we obtain  that 
$$\frac{dP_n^k}{dx}(x)=n\;\dfrac{E_n^k(x)-E_n^k(-x)}{2}.$$
 This yields, in view of (\ref{pn}), the representation formula:
\begin{equation}\label{jacobi}
   E_n^k(x)= P_{n}^k( x)+2\sinh x \;P_{n-1}^{k+1}(x),\quad n\geq 1.
\end{equation}
 \par It should be noted that the non-symmetric  Heckman-Opdam polynomials $E_n^k$ are closely  related to non-symmetric
Jack polynomials,  see \cite{Sah}.
If  $\lambda=(\lambda_1,\lambda_2)\in \mathbb{Z}\times \mathbb{Z} $ and $\mathcal{I}_{\lambda }^k$ is the correspondent non-symmetric
Jack polynomial then
$$E_{\lambda_2-\lambda_1}^k= \mathcal{I}_{\lambda}^k(e^{-x},e^{x}).$$
 \begin{prop}
 For all $n\in \mathbb{Z}$, we have
\begin{equation}\label{12}
   E_{n+1}^k(x)=e^xE_{-n}^k(-x).
\end{equation}
\end{prop}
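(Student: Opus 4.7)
My plan is to identify $F(x) := e^x E_{-n}^k(-x)$ with $E_{n+1}^k(x)$ by checking that $F$ satisfies the two characterizing properties (a) and (b); the uniqueness of the non-symmetric Heckman–Opdam polynomials then forces equality.

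First I would expand $E_{-n}^k$ using (a) and substitute $z = -x$, obtaining
$$E_{-n}^k(-x) = e^{nx} + \sum_{j \triangleleft -n} c_{-n,j}\, e^{-jx},$$
so that $F(x) = e^{(n+1)x} + \sum_{j \triangleleft -n} c_{-n,j}\, e^{(1-j)x}$ has leading term $e^{(n+1)x}$ with coefficient $1$. To confirm that $F$ has the triangular form (a) for the index $n+1$, it remains to verify the combinatorial implication
$$j \triangleleft -n \Longrightarrow 1 - j \triangleleft n+1,$$
which I would check by comparing $|1-j|$ to $n+1$ and tracking the parities prescribed by the definition of $\triangleleft$.

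Next, for the orthogonality condition (b), I would pick any $\ell \triangleleft n+1$ and transform
$$(F(ix), e^{i\ell x})_k = \int_{-\pi}^{\pi} e^{ix}\, E_{-n}^k(-ix)\, e^{-i\ell x}\, |\sin x|^{2k}\, dx$$
by the change of variable $y = -x$, which preserves the measure $|\sin x|^{2k}\,dx$. This rewrites the integral as $(E_{-n}^k(iy), e^{i(1-\ell)y})_k$, which vanishes by property (b) applied to $E_{-n}^k$ provided $1-\ell \triangleleft -n$. I would then verify the implication $\ell \triangleleft n+1 \Rightarrow 1-\ell \triangleleft -n$ by the same type of parity/magnitude argument, which is in fact the inverse of the one used in the previous step: the affine map $j \mapsto 1-j$ gives the bijection between the two index sets.

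The main obstacle is the combinatorial bookkeeping for the partial order $\triangleleft$, which mixes absolute value, parity, and sign, and whose definition behaves differently according to the sign of the reference index. To keep the case analysis manageable I would first treat $n \geq 0$ (the case $n=0$ being immediate from \eqref{13}) and then reduce $n \leq -1$ to this by replacing $n$ with $-n-1 \geq 0$ in the already-established identity $E_{m+1}^k(x) = e^x E_{-m}^k(-x)$ and then applying the substitution $x \mapsto -x$, which yields the desired relation for negative indices.
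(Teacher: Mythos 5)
Your argument is correct, but it proves the proposition by a genuinely different route than the paper. The paper's proof is spectral: it sets $H_n(x)=e^xE_{-n}^k(-x)$, computes $T^k(H_n)$ directly from the definition (\ref{T}) to find $T^k(H_n)=(n+1)_k\,H_n$, and then identifies $H_n$ with $E_{n+1}^k$ by comparing leading coefficients (implicitly using that the eigenvalues $m_k$ are pairwise distinct, so the eigenvalue plus the leading term pins the polynomial down). You instead go back to the defining conditions (a) and (b): the triangular expansion of $e^xE_{-n}^k(-x)$ with leading term $e^{(n+1)x}$, the combinatorial implication $j\triangleleft -n \Rightarrow 1-j\triangleleft n+1$ for (a), and the measure-preserving substitution $x\mapsto -x$ together with $\ell\triangleleft n+1 \Rightarrow 1-\ell\triangleleft -n$ for (b); these implications do hold (for $n\ge 1$ one checks the three cases $j=n$, $0\le j<n$, $j<0$, and similarly in the reverse direction, where the boundary case $|1-\ell|=n$ lands in the second clause of $\triangleleft$ since $1-\ell=n>-n$), and your reduction of $n\le -1$ to $n\ge 0$ via the substitution $x\mapsto -x$ in the already-proved identity is valid, so the case analysis you defer is routine and closes. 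What each approach buys: the paper's computation is short and makes the eigenvalue relation (\ref{TE}) do all the work, but leans on the uniqueness of eigenfunctions, which the proof leaves implicit; yours uses only the characterization (\ref{def}) and the symmetry of $dm_k$, at the cost of the bookkeeping for the order $\triangleleft$, and it has the side benefit of exhibiting $j\mapsto 1-j$ as a bijection of the lower index sets, which explains structurally why the identity and the equality of norms $\gamma_{-n}=\gamma_{n+1}$ hold.
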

\begin{proof}
In view of (\ref{13}), the identity (\ref{12}) holds for     for $n= -1,0$. Let $n\in \mathbb{Z}$, $n\neq -1,0$. Put
 $H_n^k(x)= e^xE_{-n}^k(-x)$. It is  enough to check that $ T^k(H_n)=(n+1)_kH_n$. We have
\begin{eqnarray*}
&&T^k(H_n)(x)
\\ &&\quad =e^xE_{-n}^k(-x)-e^x(E_{-n}^k)'(-x)+2k \; \frac{e^xE_{-n}^k(-x)-e^{-x}E_{-n}^k(x)}{1-e^{-2x}}-ke^xE_{-n}^k(-x)
\\&&\quad= -e^x \left((E_{-n}^k)'(-x)+2k\;\frac{E_{-n}^k(-x)-E_{-n}^k(x)}{1-e^{2x}} -k E_{-n}^k(-x) \right)+  e^xE_{-n}^k(-x)
\\&&\quad= ( 1- (-n)_k)e^xE_{-n}(-x)=(n+1)_ke^xE_{-n}(-x)=(n+1)_kH_n(x).
\end{eqnarray*}
Comparing the leading coefficients yields (\ref{12}). 
\end{proof}
\begin{prop}
 For $n \in \mathbb{Z}^+$, $n\neq 0$, we have
\begin{equation}\label{45}
 E_{-n}^k(x)=E_{n}^k(-x) +\frac{k}{n+k}E_n^k(x).
\end{equation}
  \end{prop}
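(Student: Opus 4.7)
The plan is to show that the right-hand side
$$Q(x) := E_n^k(-x) + \frac{k}{n+k} E_n^k(x)$$
is an eigenfunction of $T^k$ with eigenvalue $(-n)_k = -(n+k)$, then identify it with $E_{-n}^k$ via uniqueness plus a leading-coefficient check.

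First I would compute $T^k$ applied to the reflected function $g(x):=E_n^k(-x)$. Starting from the two identities stated just before \eqref{jacobi}, namely
$$(E_n^k)'(x)+2k\,\frac{E_n^k(x)-E_n^k(-x)}{1-e^{-2x}}=(n+2k)E_n^k(x),$$
I would replace $x$ by $-x$ to get an explicit expression for $(E_n^k)'(-x)$, hence for $g'(x)=-(E_n^k)'(-x)$. Substituting this into the definition of $T^k(g)(x)$ produces the two singular fractions
$\tfrac{E_n^k(-x)-E_n^k(x)}{1-e^{2x}}$ and $\tfrac{E_n^k(-x)-E_n^k(x)}{1-e^{-2x}}$. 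The key algebraic observation is the elementary identity
$$\frac{1}{1-e^{2x}}+\frac{1}{1-e^{-2x}}=1,$$
which collapses these two fractions into the single regular term $2k\bigl(E_n^k(-x)-E_n^k(x)\bigr)$. After collecting terms one obtains the clean formula
$$T^k\!\left(E_n^k(-\cdot)\right)(x)=-(n+k)\,E_n^k(-x)-2k\,E_n^k(x).$$

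Combining this with $T^k(E_n^k)=(n+k)E_n^k$ (which holds for $n\geq 1$), a direct computation gives
$$T^k Q=-(n+k)E_n^k(-\cdot)-2k E_n^k+\frac{k}{n+k}(n+k)E_n^k=-(n+k)\,Q.$$
Thus $Q$ is an eigenfunction of $T^k$ on the space of trigonometric (Laurent) polynomials with eigenvalue $-(n+k)=(-n)_k$. Since the eigenvalues $\{m_k: m\in\mathbb{Z}\}$ are pairwise distinct (for $n\geq 1$ and $k\geq 0$, positive indices give eigenvalues $\geq 1+k$ while nonpositive indices give eigenvalues $\leq -k$), the corresponding eigenspace in the space of finite trigonometric expansions is one-dimensional and spanned by $E_{-n}^k$. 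Therefore $Q=c\,E_{-n}^k$ for some constant $c$.

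To pin down $c$, I would read off the coefficient of $e^{-nx}$ on both sides. In $E_{-n}^k(x)=e^{-nx}+\sum_{j\triangleleft -n}c_{-n,j}e^{jx}$ this coefficient is $1$ by property (a). On the other side, $E_n^k(-x)$ contributes $1$ (the leading monomial of $E_n^k$ at $-x$), while $E_n^k(x)$ contributes $0$ because $-n$ does not satisfy $|-n|<n$ and hence the index $-n$ does not appear among the subleading exponents of $E_n^k$. Hence $c=1$ and \eqref{45} follows. The only nontrivial step is the cancellation using the $\tfrac{1}{1-e^{2x}}+\tfrac{1}{1-e^{-2x}}=1$ identity; everything else is bookkeeping.
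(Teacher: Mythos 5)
Your proposal is correct and follows essentially the same route as the paper: show that $E_n^k(-x)+\frac{k}{n+k}E_n^k(x)$ is an eigenfunction of $T^k$ with eigenvalue $-(n+k)=(-n)_k$ and then identify it with $E_{-n}^k$ by comparing the coefficient of $e^{-nx}$. The only difference is that you spell out what the paper leaves implicit, namely the derivation of $T^k\bigl(E_n^k(-\cdot)\bigr)(x)=-(n+k)E_n^k(-x)-2kE_n^k(x)$ via the identity $\frac{1}{1-e^{2x}}+\frac{1}{1-e^{-2x}}=1$ and the distinct-eigenvalue uniqueness argument, which is a sound elaboration rather than a different proof.
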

\begin{proof}
Observe that  for $n\geq 1$ we have $T^k(E_n(-x))=-2kE_n(x)- (n+k)E_n(-x)$. Then
\begin{eqnarray*}
  T^k\left( E_n^k(-x)+\frac{k}{n+k}E_n^k(x)\right)&=&-2kE_{n}(x)- (n+k)E_n(-x)+ kE_{n}(x)
\\&=&-kE_{n}(x)-(n+k)E_n^k(-x)
\\&=&-(n+k)\left(E_n^k(-x)+\frac{k}{n+k}E_n^k(x)\right).
 \end{eqnarray*}
Equality  (\ref{45}) follows by comparing the highest coefficient.
\end{proof}
  \begin{prop}
  For all $n  \in \mathbb{Z}^+$, we have
  \begin{equation}\label{Ga}
   \|E_{n+1}^k\|_{2,k}=\|E_{-n}^k\|_{2,k}=  \pi2^{1-2k} n!\; \frac{\Gamma(n+2k+1)}{\Gamma(n+k+1)^2}. 
  \end{equation}
 Here   $\|E_{\nu}^k\|_{2,k}$ denotes the norm  of $E_{\nu}^k(ix)$ in the space $L^2([-\pi,\pi],dm_k)$.
  \end{prop}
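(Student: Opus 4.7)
The plan is to first reduce the two $L^2$-norms to a single computation via the relation \eqref{12}, and then compute $\|E_{n+1}^k\|_{2,k}^2$ by decomposing $E_{n+1}^k$ into its symmetric and antisymmetric parts using the Jacobi representation \eqref{jacobi} and invoking the classical $L^2$-norm of Gegenbauer polynomials through \eqref{ultra}.

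For the first equality, identity \eqref{12} gives $E_{n+1}^k(ix) = e^{ix}E_{-n}^k(-ix)$. Taking absolute values kills the unimodular factor $e^{ix}$, and the change of variable $x \mapsto -x$, under which $dm_k$ is invariant, immediately yields $\|E_{n+1}^k\|_{2,k} = \|E_{-n}^k\|_{2,k}$.

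For the explicit evaluation, I would apply \eqref{jacobi} at $z = ix$ to obtain
$$E_{n+1}^k(ix) = P_{n+1}^k(ix) + 2i\sin x\,P_n^{k+1}(ix).$$
By \eqref{ultra}, $P_{n+1}^k(ix)$ and $P_n^{k+1}(ix)$ are real multiples of $C_{n+1}^k(\cos x)$ and $C_n^{k+1}(\cos x)$ respectively, hence real-valued and even in $x$; so the second summand is purely imaginary and odd. The cross term in $|E_{n+1}^k(ix)|^2$ is therefore odd against the even weight $|\sin x|^{2k}$, and integration over $[-\pi,\pi]$ gives
$$\|E_{n+1}^k\|_{2,k}^2 = \|P_{n+1}^k\|_{2,k}^2 + 4\|P_n^{k+1}\|_{2,k+1}^2.$$

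Each term on the right reduces, via \eqref{ultra} and the substitution $u = \cos x$, to the classical Gegenbauer orthogonality integral
$$\int_{-1}^{1}[C_n^\lambda(u)]^2(1-u^2)^{\lambda-1/2}\,du = \frac{\pi\,2^{1-2\lambda}\Gamma(n+2\lambda)}{n!(n+\lambda)\Gamma(\lambda)^2}.$$
After plugging back the proportionality constant $P_n^k(0)/C_n^k(1)$ and simplifying via $\Gamma(k+1) = k\Gamma(k)$ and $\Gamma(2k+1) = 2k\Gamma(2k)$, this gives the compact formula
$$\|P_n^k\|_{2,k}^2 = \frac{\pi\,2^{-2k}\,n!\,\Gamma(n+2k)}{\Gamma(n+k)\,\Gamma(n+k+1)}.$$
Summing the $(n+1,k)$ and $(n,k+1)$ contributions, the common denominator becomes $\Gamma(n+k+1)\Gamma(n+k+2)$, and the two numerators combine as $n!\,\Gamma(n+2k+1)\bigl[(n+1) + (n+2k+1)\bigr] = 2(n+k+1)\,n!\,\Gamma(n+2k+1)$. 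Cancelling $(n+k+1)$ with $\Gamma(n+k+2) = (n+k+1)\Gamma(n+k+1)$ produces exactly $\pi\,2^{1-2k}\,n!\,\Gamma(n+2k+1)/\Gamma(n+k+1)^2$.

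The main obstacle is bookkeeping rather than conceptual: one must carry the proportionality constant of \eqref{ultra} through the computation and then recognize the telescoping $\Gamma$-identity $(n+1)+(n+2k+1) = 2(n+k+1)$ that produces the denominator $\Gamma(n+k+1)^2$ in perfect form. (Note that the quantity displayed in \eqref{Ga} is in fact $\|E_{n+1}^k\|_{2,k}^2$; a single check at $n=0$, where $E_1^k(ix)=e^{ix}$ and the integral reduces via the duplication formula to $2\sqrt{\pi}\,\Gamma(k+\tfrac12)/\Gamma(k+1)$, confirms this.)
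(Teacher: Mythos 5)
Your proof is correct, but it takes a genuinely different route from the paper. The paper never touches Gegenbauer integrals: it derives from \eqref{45} and the orthogonality of $E_n^k$ and $E_{-n}^k$ the identity $\|E_n^k\|_{2,k}^2=\|E_{-n}^k\|_{2,k}^2+\tfrac{k^2}{(n+k)^2}\|E_n^k\|_{2,k}^2$, combines it with $\|E_{n+1}^k\|_{2,k}=\|E_{-n}^k\|_{2,k}$ (your first step, via \eqref{12}, is exactly this) to obtain the two-term recursion \eqref{69}, $\|E_{n+1}^k\|_{2,k}^2=\tfrac{n(n+2k)}{(n+k)^2}\|E_n^k\|_{2,k}^2$, and then telescopes from the elementary base case $\|E_1^k\|_{2,k}^2=\pi2^{1-2k}\Gamma(2k+1)/\Gamma(k+1)^2$. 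That argument stays entirely inside the Cherednik/Heckman--Opdam framework and needs only a beta integral; your argument instead evaluates the norm in one shot from the decomposition \eqref{jacobi} (where, since $P_{n+1}^k(ix)$ is real and $2i\sin x\,P_n^{k+1}(ix)$ is purely imaginary, there is simply no cross term --- the parity remark is redundant) together with Szeg\H{o}'s $L^2$-norm of $C_n^\lambda$, and it buys the closed form $\|P_n^k\|_{2,k}^2=\pi2^{-2k}n!\,\Gamma(n+2k)/\bigl(\Gamma(n+k)\Gamma(n+k+1)\bigr)$ as a byproduct; I checked your $\Gamma$-bookkeeping, including the combination $(n+1)+(n+2k+1)=2(n+k+1)$, and it is right. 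Two small caveats: the constant $P_n^k(0)/C_n^k(1)$ from \eqref{P}--\eqref{ultra} that your norm formula relies on is the paper's normalization valid for $n\ge1$ (at index $0$ the paper's conventions force $P_0^{k+1}=\tfrac12$ in \eqref{jacobi}), but this never hurts you because for $n\ge1$ all indices you use are $\ge1$ and you verify $n=0$ directly from $E_1^k(ix)=e^{ix}$; and you are right that \eqref{Ga} actually displays the \emph{squared} norm --- the paper's own proof works with $\|\cdot\|_{2,k}^2$ throughout, so your reading matches its intent.
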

\begin{proof}
In view of  (\ref{45}), we write
$$  E_n^k(-x)= E_{-n}^k(x)-\frac{k}{n+k}E_n^k(x), \quad n\geq 1\quad.$$
  Since $E_n^k$ and $E_{-n}^k$ are orthogonal, it follows that
$$\|E_n^k\|_{2,k}^2= \|E_{-n}^k\|_{2,k}^2 +\frac{k^2}{(n+k)^2}\|E_n^k\|_{2,k}^2$$
 and hence
 \begin{equation}\label{69}
\|E_{n+1}^k\|_{2,k}^2= \|E_{-n}^k\|_{2,k}^2 =
\frac{n(n+2k)}{(n+k)^2} \|E_n^k\|_{2,k}^2 .
 \end{equation}
Moreover, one easily checks that
 $$ \|E_1^k\|_{2,k}^2=\|E_{0}^k\|_{2,k}^2=   \pi2^{1-2k}\frac{\Gamma(2k+1)}{\Gamma(k+1)^2}=\dfrac{2\sqrt{\pi}\Gamma(k+\dfrac{1}{2})}{\Gamma(k+1)}.$$
The formula (\ref{Ga}) then follows.
\end{proof}
 \par In conclusion,  the explicit expressions of the Heckman–Opdam polynomials $E_n^k$ are given by:
$E_0^k = 1$, and for all $n\geq 1$,
\begin{equation}\label{cases}
\begin{cases}
\displaystyle
E_n^k(x)
= P_n^k(x) + 2\sinh(x)\, P_{n-1}^{\,k+1}(x), \\[2mm]
\displaystyle
E_{-n}^k(x)
= \frac{n+2k}{\,n+k\,}\, P_n^k(x)
- \frac{2n}{\,n+k\,}\, \sinh(x)\, P_{\,n-1}^{\,k+1}(x).
\end{cases}
\end{equation}

\par Let function $f\in L^p([-\pi,\pi],dm_k(x))$.The Fourier-Heckman-Opdam coefficients of $f$ are defined by
\begin{equation}\label{an}
a_{n}= \gamma_{n}^2\int_{-\pi}^{\pi}f(x) E_n^k(-ix)dm_k(x), \qquad n\in \mathbb{Z}.
\end{equation}
 where  $\gamma_{n}=\|E_n^k\|_{2,k}^{-1}$.
The Fourier–Heckman–Opdam series of $f$ is formally given by
$$f(x)\sim \sum_{n=-\infty}^{+\infty}a_n E_n^k(ix).$$
For  $N\in \mathbb{N}$, we define the $N$-th partial sum 
$$S_{N}(f)(x)=\sum_{n=-N}^{N}a_{n} E_n^k(ix).$$
Then  $S_N(f)$ can be represented as an integral operator:
\begin{equation}\label{SK}
 S_N(f)(x)=\int_{-\pi}^{\pi}K_N(x,y)f(y)dm_k(y).
\end{equation}
where 
 $$K_{N}(x,y)=\sum_{n=-N}^N \gamma_{n}^2 E_n^k(ix)E_n^k(-iy)$$
Throughout this work, for sake of simplicity in writing, we omit the dependence on $k$ in certain  notations whenever no ambiguity can arise. The norm on the space 
$L^p([-\pi,\pi],dm_k)$ will be denoted  by $\|.\|_{p,k}$. We shall denote by 
A a constant, which may vary from line to line.

 \section{ $L^p$-Convergence of Partial Sums}
The main purpose is to establish the $L^p$-convergence of the partial sums $\{S_N(f)\}_{N\in \mathbb{N}}$.
 \begin{thm}[Main result]\label{MR}
Let $f \in L^p([-\pi,\pi],dm_k)$ and assume that
$$
2-\frac{1}{k+1} < p < 2+\frac{1}{k}.$$
Then the sequence of partial sums $\{S_N(f)\}_{N\ge1}$ converges to $f$ in
$L^p([-\pi,\pi],dm_k)$, that is,
$$
\lim_{N\to\infty}\|S_N(f)-f\|_{p,k}=0.
$$
\end{thm}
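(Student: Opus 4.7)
The plan is to follow the Banach--Steinhaus strategy used by Pollard and Muckenhoupt for Gegenbauer expansions, adapted to the non-symmetric setting via the representation formula~(\ref{cases}). Since each $E_n^k(ix)$ is a trigonometric polynomial and the transition matrix from $\{E_n^k(ix)\}_{|n|\le N}$ to $\{e^{ijx}\}_{|j|\le N}$ is triangular by~(\ref{def}a) with unit diagonal, the two spans coincide; hence trigonometric polynomials are dense in $L^p([-\pi,\pi],dm_k)$ for every $1\le p<\infty$ and $S_N$ fixes every such polynomial of degree at most $N$. By the uniform boundedness principle it therefore suffices to prove
$$
\sup_{N\ge 1}\bigl\|S_N\bigr\|_{L^p(dm_k)\to L^p(dm_k)} < \infty
$$
for $p$ in the stated range.

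The main computational step is a clean decomposition of the kernel. Writing $p_n(x)=P_n^k(ix)$ and $q_n(x)=P_{n-1}^{k+1}(ix)$, the formula~(\ref{cases}) expresses each $E_{\pm n}^k(ix)$ as an explicit linear combination of $p_n(x)$ and $i\sin(x)\,q_n(x)$. Grouping the terms indexed by $n$ and $-n$ in $K_N$ and using the norm identity $\gamma_{-n}^2=\tfrac{(n+k)^2}{n(n+2k)}\gamma_n^2$ obtained from~(\ref{69}), a direct calculation shows that all cross terms of the form $\sin(x)\,q_n(x)\,p_n(y)$ and $\sin(y)\,p_n(x)\,q_n(y)$ cancel exactly, yielding
$$
K_N(x,y)=\gamma_0^2+\sum_{n=1}^{N}\Bigl[\tfrac{2(n+k)}{n}\gamma_n^2\,p_n(x)p_n(y)+\tfrac{8(n+k)}{n+2k}\gamma_n^2\sin(x)\sin(y)\,q_n(x)q_n(y)\Bigr].
$$
By~(\ref{ultra}), the first sum is a positive multiple of the Gegenbauer reproducing kernel of order $k$ on $L^2(dm_k)$, while the second sum is, after absorbing $\sin^2(y)\,dm_k=dm_{k+1}$, the Gegenbauer reproducing kernel of order $k+1$ on $L^2(dm_{k+1})$. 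Denoting by $T_N^{(\lambda)}$ the partial sum operator for the Gegenbauer expansion of order $\lambda$ against $dm_\lambda$, this gives the splitting
$$
S_N f(x)=T_N^{(k)}f(x)+\sin(x)\,T_N^{(k+1)}(f/\sin)(x).
$$

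It then remains to bound both pieces uniformly on $L^p(dm_k)$ in the stated range. The first piece is exactly Pollard's classical ultraspherical partial sum of order $k$, and his theorem~\cite{P} yields $\sup_N\|T_N^{(k)}\|_{L^p(dm_k)\to L^p(dm_k)}<\infty$ precisely on the interval $2-\tfrac{1}{k+1}<p<2+\tfrac{1}{k}$. The main obstacle is the second piece: the natural Pollard range for $T_N^{(k+1)}$ on $L^p(dm_{k+1})$ is the strictly narrower interval $2-\tfrac{1}{k+2}<p<2+\tfrac{1}{k+1}$, so the bound cannot be read off directly. The $\sin$ factors on both sides, however, convert the required estimate into uniform boundedness of $T_N^{(k+1)}$ on the weighted space $L^p\bigl(|\sin y|^{p+2k}\,dy\bigr)$. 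Invoking Muckenhoupt's weighted version of the Pollard theorem for ultraspherical partial sums, boundedness holds exactly when the weight exponent and $p$ satisfy the Pollard-type inequalities at parameter $k+1$; a short computation reduces these to the very same range $2-\tfrac{1}{k+1}<p<2+\tfrac{1}{k}$. Combined with the Banach--Steinhaus reduction, this completes the proof.
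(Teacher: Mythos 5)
Your proposal is essentially correct, and it takes a genuinely different route from the paper. The paper never decouples the parities: it derives a Christoffel--Darboux-type closed formula for the non-symmetric kernel $K_N$ (Lemma \ref{prop1}, via $\gamma_{-n}=\gamma_{n+1}$ and $E_{-n}^k(iu)=e^{iu}E_{n+1}^k(-iu)$), rewrites it as in (\ref{K}), and then bounds $S_N$ directly by the M.~Riesz conjugate-function theorem \cite{R} combined with the Pollard-type weight-ratio lemmas (Lemmas \ref{lem11} and \ref{dua}) and the bound on $a_{N+1}/\gamma_{N+1}$, before concluding by density as in \cite{P}. You instead exploit parity: your cancellation of the cross terms is correct, and the constants do come out exactly right --- one can check from (\ref{Ga}), (\ref{ultra}) and the Gegenbauer norms that $\tfrac{2(n+k)}{n}\gamma_n^2=\|P_n^k(i\cdot)\|_{2,k}^{-2}$ and $\tfrac{8(n+k)}{n+2k}\gamma_n^2=\|P_{n-1}^{k+1}(i\cdot)\|_{2,k+1}^{-2}$ --- so $K_N$ is exactly the order-$k$ Gegenbauer Dirichlet kernel plus $\sin x\sin y$ times the order-$(k+1)$ Dirichlet kernel (truncated at $N-1$, a harmless index slip), and $S_N$ splits into an even piece covered verbatim by Pollard's theorem and an odd piece which is a weighted mean-convergence statement at parameter $k+1$. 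This reduction is attractive because it makes transparent why the range is governed by $k$ rather than $k+1$ and avoids re-deriving kernel estimates; the paper's route is self-contained modulo \cite{P,PP,R} and does not need any weighted Jacobi-convergence theorem. Two caveats you should repair before calling this a proof. First, your crucial weighted estimate for the odd piece is asserted, not verified: you need uniform boundedness of the order-$(k+1)$ partial sums on $L^p(|\sin\theta|^{p+2k}d\theta)$, i.e.\ with weight $|\sin\theta|^{p-2}$ relative to $dm_{k+1}$, and you must actually check all of the hypotheses of Muckenhoupt's mean-convergence theorem for Jacobi series (a reference not in this paper's bibliography) in the stated range; they do reduce to $-1<k(2-p)<p-1$ together with the endpoint integrability conditions, which is exactly $2-\tfrac{1}{k+1}<p<2+\tfrac1k$, but that computation is the heart of the matter and cannot be waved through. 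Second, the reduction to uniform boundedness is not an application of the uniform boundedness principle (which gives the converse implication); what you need is the elementary argument that $S_N$ fixes trigonometric polynomials of degree at most $N$ (your triangular-transition observation from (\ref{def}) is the right justification) together with density and the uniform bound, which is precisely the argument of Theorem 5.1 in \cite{P} that the paper invokes.
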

 The proof is carried out by means of a sequence of auxiliary lemmas.
 \begin{lem}\label{prop1}
 For any integer $N\geq 0$
 and all $x,y\in\mathbb{R}$
 such that $x+y\notin 2\pi\mathbb{Z}$, 
 the following identity holds:
 \begin{equation}\label{sum}
 K_N(x,y)=\gamma_{N+1}^2\frac{e^{-i(x-y)} E_{N+1}^k(ix)E_{N+1}^k(-iy)-^2E_{N+1}^k(-ix)E_{N+1}^k(iy)}{1-e^{-i(x-y)}}.
 \end{equation}
 \end{lem}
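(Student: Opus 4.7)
My plan is to derive this identity---a Christoffel--Darboux-type formula adapted to the non-symmetric Heckman--Opdam setting---by combining the inversion identity \eqref{12}, $E_{n+1}^k(x)=e^xE_{-n}^k(-x)$, with the norm relation $\gamma_{n+1}=\gamma_{-n}$ (a reformulation of \eqref{Ga}), together with the $(x,y)$-symmetry of $K_N$. Once this is set up, multiplication by $e^{-i(x-y)}$ shifts the summation index by one, and the shifted sum will cancel against the original except at the two boundary indices $m=-N$ and $m=N+1$.

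First I would apply \eqref{12} with $x$ replaced by $ix$ and by $-iy$ to produce the shift relations $e^{-ix}E_m^k(ix)=E_{1-m}^k(-ix)$ and $e^{iy}E_m^k(-iy)=E_{1-m}^k(iy)$, valid for every $m\in\mathbb{Z}$. Together with $\gamma_m=\gamma_{1-m}$, these imply
$$e^{-i(x-y)}\,\gamma_m^{2}\,E_m^k(ix)E_m^k(-iy)=\gamma_{1-m}^{2}E_{1-m}^k(-ix)E_{1-m}^k(iy),$$
so that summing over $-N\le m\le N$ and re-indexing $\ell=1-m$ gives
$$e^{-i(x-y)}K_N(x,y)=\sum_{\ell=1-N}^{N+1}\gamma_\ell^{2}E_\ell^k(-ix)E_\ell^k(iy).$$

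The heart of the argument is to establish the symmetry $K_N(x,y)=K_N(y,x)$, which I would prove by induction on $N$. The base case $K_0=\gamma_0^2$ is immediate. For the inductive step it suffices to check that the increment $\Delta_N(x,y):=\gamma_N^{2}E_N^k(ix)E_N^k(-iy)+\gamma_{-N}^{2}E_{-N}^k(ix)E_{-N}^k(-iy)$ is invariant under $(x,y)\mapsto(y,x)$. Expanding each factor $E_{-N}^k$ through \eqref{45} into $E_N^k(\pm i\cdot)$, the mixed terms involving $E_N^k(ix)E_N^k(iy)$ or $E_N^k(-ix)E_N^k(-iy)$ are manifestly symmetric, while the antisymmetric part $\Delta_N(x,y)-\Delta_N(y,x)$ collapses to
$$\left(\gamma_N^{2}-\frac{N(N+2k)}{(N+k)^2}\gamma_{-N}^{2}\right)\bigl[E_N^k(ix)E_N^k(-iy)-E_N^k(iy)E_N^k(-ix)\bigr],$$
whose prefactor vanishes exactly by the norm ratio \eqref{69}.

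With the symmetry in hand I would rewrite $K_N(x,y)=\sum_{\ell=-N}^{N}\gamma_\ell^{2}E_\ell^k(-ix)E_\ell^k(iy)$ and subtract it from the identity of the first step; the common range $\{1-N,\ldots,N\}$ cancels and only the two boundary terms remain, giving
$$(e^{-i(x-y)}-1)K_N(x,y)=\gamma_{N+1}^{2}E_{N+1}^k(-ix)E_{N+1}^k(iy)-\gamma_{-N}^{2}E_{-N}^k(-ix)E_{-N}^k(iy).$$
A final use of \eqref{12} yields $E_{-N}^k(-ix)E_{-N}^k(iy)=e^{-i(x-y)}E_{N+1}^k(ix)E_{N+1}^k(-iy)$; combining this with $\gamma_{-N}=\gamma_{N+1}$ and dividing by $1-e^{-i(x-y)}$ then produces \eqref{sum}. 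The main obstacle will be the symmetry step: $K_N$ is a genuinely complex kernel whose invariance under $(x,y)\mapsto(y,x)$ is not visible term by term, so the proof really rests on the precise arithmetic matching between \eqref{45} and the norm ratio \eqref{69}.
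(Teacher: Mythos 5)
Your argument is correct, and it is built from exactly the same ingredients as the paper's proof: the shift identity \eqref{12} together with $\gamma_{-n}=\gamma_{n+1}$ from \eqref{Ga}, the decomposition \eqref{45}, and the norm ratio \eqref{69}, all feeding into a telescoping functional equation $(1-e^{-i(x-y)})K_N=\,$boundary terms. The difference is organizational rather than conceptual. The paper splits $K_N=A_N+B_N$ over negative and positive indices, shows $K_N$ is \emph{real-valued} by regrouping via \eqref{45} and $\gamma_n^2+\tfrac{k^2}{(n+k)^2}\gamma_{-n}^2=\gamma_{-n}^2$, and then obtains the functional equation by taking complex conjugates -- a step that implicitly uses the realness of the coefficients of $E_n^k$ (so that $\overline{E_n^k(ix)}=E_n^k(-ix)$). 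You instead prove the $(x,y)$-symmetry $K_N(x,y)=K_N(y,x)$ directly by induction, with the antisymmetric part of each increment killed precisely by $\gamma_N^2=\tfrac{N(N+2k)}{(N+k)^2}\gamma_{-N}^2$, and you apply the index shift $m\mapsto 1-m$ to the whole sum at once; for real $x,y$ your symmetry statement is equivalent to the paper's realness claim, but your route avoids conjugation altogether and hence does not need the real-coefficient input. Your final manipulation, $E_{-N}^k(-ix)E_{-N}^k(iy)=e^{-i(x-y)}E_{N+1}^k(ix)E_{N+1}^k(-iy)$ with $\gamma_{-N}=\gamma_{N+1}$, reproduces \eqref{sum} exactly (the "$-^2$" in the displayed statement is a typo for a minus sign, and the condition should read $x-y\notin 2\pi\mathbb{Z}$, which is where the denominator is nonzero; neither proof needs more than that).
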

 \begin{proof}  
  We first show that $K_N(x,y)$ is real valued.
By (\ref{45}),
\begin{eqnarray*}
&&\sum_{n=1}^N\gamma_{-n}^2 E_{-n}^k(ix)E_{-n}^k(-iy)\\
&&=\sum_{n=1}^N\gamma_{-n}^2 E_{n}^k(-ix)E_{n}^k(iy)
+\sum_{n=1}^N\dfrac{k^2}{(n+k)^2}\gamma_{-n}^2 E_{n}^k(ix)E_{n}^k(-iy) 
\\&&\;\;+\sum_{n=1}^N\dfrac{k}{n+k}\gamma_{-n}^2 \Big(E_{n}^k(-ix)E_{n}^k(-iy)
+E_{n}^k(ix)E_{n}^k(iy)\Big).
\end{eqnarray*}
 From  (\ref{69}), the normalization constants satisfy
$$\gamma_{n}^2=\dfrac{n(n+2k)}{(n+k)^2}\gamma_{-n}^2,$$
from which 
$$\gamma_{n}^2+\dfrac{k^2}{(n+k)^2}\gamma_{-n}^2=\gamma_{-n}^2.$$
Consequently, combining the above identities  yields
\begin{eqnarray}\nonumber
 K_N(x,y)&=&\gamma_0^2+\sum_{n=1}^N \gamma_{n}^2  E_n^k(ix)E_n^k(-iy)+\sum_{n=1}^N \gamma_{-n}^2  E_{-n}^k(ix)E_{-n}^k(-iy)
\\&=&\gamma_0^2 +2\sum_{n=1}^N \gamma_{-n}^2  Re\Big(E_n^k(ix)E_n^k(-iy)\Big)+2\sum_{n=1}^N\dfrac{k}{n+k}\gamma_{-n}^2 Re \Big(E_{n}^k(ix)E_{n}^k(iy)\Big).\nonumber\\&&
\end{eqnarray}
Hence $K_N(x,y)$ is real-valued. Let us now  decompose $K_N(x,y)$ as
 $$K_N(x,y)=\sum_{n=0}^N\gamma_{-n}^2 E_{-n}^k(ix)E_{-n}^k(-iy)+\sum_{n=1}^N \gamma_n^2  E_n^k(ix)E_n^k(-iy)=A_N+B_N.$$
 Using the identities (in view of (\ref{Ga}) and \ref{12}),
 $$\gamma_{-n}=\gamma_{n+1},\qquad E_{-n}^k(iu)=e^{iu}E_{n+1}^k(-iu)$$
 we obtain
 \begin{eqnarray*}
 A_N&=&\sum_{n=0}^N\gamma_{-n}^2 E_{-n}^k(ix)E_{-n}^k(-iy)
 \\&=&e^{i(x-y)}
 \sum_{n=0}^N\gamma_{n+1}^2 E_{n+1}^k(-ix)E_{n+1}^k(iy)
 \\&=&e^{i(x-y)}
 \sum_{n=1}^{N+1}\gamma_{n}^2 E_{n}^k(-ix)E_{n}^k(iy)
 \\&=&e^{i(x-y)}
 \sum_{n=1}^{N}\gamma_{n}^2 E_{n}^k(-ix)E_{n}^k(iy)+ 
e^{i(x-y)}\gamma_{N+1}^2  E_{N+1}^k(-ix)E_{N+1}^k(iy)
\\&=&e^{i(x-y)}\overline{ B_N}+e^{i(x-y)}\gamma_{N+1}^2 E_{N+1}^k(-ix)E_{N+1}^k(iy).
 \end{eqnarray*}
 Since  $A_N+B_N$ is real,  we have 
  \begin{eqnarray*}
  K_N(x,y) &=& B_N+e^{i(x-y)}\overline{ B_N}+e^{i(x-y)}\gamma_{N+1}^2
    E_{N+1}^k(-ix)E_{N+1}^k(iy)
 \\&=&\overline{ B_N}+e^{-i(x-y)}B_N+e^{-i(x-y)} \gamma_{N+1}^2E_{N+1}^k(ix)E_{N+1}^k(-iy)
  \\&=&e^{-i(x-y)}\Big(B_N+e^{i(x-y)}\overline{ B_N}\Big)+e^{-i(x-y)}\gamma_{N+1}^2 E_{N+1}^k(ix)E_{N+1}^k(-iy)
\\&=&e^{-i(x-y)} K_N(x,y)
+e^{-i(x-y)}\gamma_{N+1}^2 E_{N+1}^k(ix)E_{N+1}^k(-iy)
\\&&-\gamma_{N+1}^2E_{N+1}^k(-ix)E_{N+1}^k(iy).
 \end{eqnarray*}
This yields the identity
\begin{eqnarray}\nonumber
   K_N(x,y)&=& \gamma_{N+1}^2\;\frac{e^{-i(x-y)}E_{N+1}^k(ix)E_{N+1}^k(-iy)-E_{N+1}^k(-ix)E_{N+1}^k(iy)}{1-e^{-i(x-y)}}
   \\&=&\gamma_{N+1}^2\;\frac{Im\Big(e^{-i(x-y)/2}E_{N+1}^k(ix)E_{N+1}^k(-iy)\Big)} {\sin\left(\frac{x-y}{2}\right)}.\label{EE}
\end{eqnarray}
We conclude by examining the case  $k=0$.  In this situation, the normalization constants reduce to  $\gamma_{n}^2=2\pi$. 
Consequently, the sum under consideration becomes 
  $$\frac{1}{2\pi}\sum_{n=-N}^N e^{in(x-y)}= \frac{1}{2\pi}e^{iN(x-y)}\sum_{n=0}^{2N} e^{-in(x-y)}=\frac{e^{iN(x-y)}
  -e^{-i(N+1)(x-y)}}{1-e^{-i(x-y)}}$$
 which is the classical formula for a finite geometric sum.
  \end{proof}
According to  (\ref{jacobi}) and  (\ref{EE}) the kernel  $K_N(x,y)$ can be written as
\begin{eqnarray}\label{K}\nonumber
K_N(x,y)&=&2\gamma_{N+1}^2\cot\left(\frac{x-y}{2}\right)
   \Big\{\sin xP_{N}^{k+1}(ix)P_{N+1}^k(iy)-\sin yP_{N}^{k+1}(iy)P_{N+1}^k(ix)\Big\}
\\ && \qquad\qquad-\gamma_{N+1}^2\Big\{P_{N+1}^k(ix)P_{N+1}^k(iy)+4\sin x\sin yP_{N}^{k+1}(ix)P_{N}^{k+1}(iy)\Big\}.\nonumber\\&&
\end{eqnarray}
\begin{lem}
There exists a constant $A>0$ independent of $x\in[-\pi,\pi]$ and $n\in \mathbb{N}$ such that
\begin{equation}\label{eq1}
 \gamma_n|\sin^k x E_n^k(ix)| \leq A. 
\end{equation}
\end{lem}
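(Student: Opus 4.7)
The plan is to exploit the explicit decomposition (\ref{cases}) in order to reduce the bound to a classical estimate for Gegenbauer polynomials.

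First I would write, for $n\geq 1$,
$$
E_n^k(ix)=P_n^k(ix)+2i\sin(x)\,P_{n-1}^{k+1}(ix),
$$
and similarly express $E_{-n}^k(ix)$ via the second line of (\ref{cases}); the latter produces the same two building blocks weighted by the coefficients $(n+2k)/(n+k)$ and $2n/(n+k)$, both of which are bounded uniformly in $n$. Thus it suffices to estimate $\sin^k(x)P_n^k(ix)$ and $\sin^{k+1}(x)P_{n-1}^{k+1}(ix)$, and then to check that $\gamma_n$ is bounded.

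Next I would substitute the ultraspherical representation (\ref{ultra}), giving
$$
P_n^k(ix)=\frac{\Gamma(k+1)}{\Gamma(2k+1)}\,\frac{n!}{\Gamma(n+k)}\,C_n^k(\cos x),
\qquad
P_{n-1}^{k+1}(ix)=\frac{\Gamma(k+2)}{\Gamma(2k+3)}\,\frac{(n-1)!}{\Gamma(n+k+1)}\,C_{n-1}^{k+1}(\cos x),
$$
and invoke Szeg\H{o}'s classical bound for ultraspherical polynomials (\cite{Sz}, (7.33.5)): for $\lambda>0$,
$$
|(\sin\theta)^{\lambda}C_n^{\lambda}(\cos\theta)|\le A\, n^{\lambda-1},\qquad n\geq 1,\ \theta\in(0,\pi).
$$
Applied with $\lambda=k$ and $\lambda=k+1$, this yields $|\sin^k x\,C_n^k(\cos x)|\le A\,n^{k-1}$ and $|\sin^{k+1} x\,C_{n-1}^{k+1}(\cos x)|\le A\,n^k$. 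Combined with Stirling's asymptotics $n!/\Gamma(n+k)\sim n^{1-k}$ and $(n-1)!/\Gamma(n+k+1)\sim n^{-k}$, both $|\sin^k x\,P_n^k(ix)|$ and $|\sin^k x\sin x\,P_{n-1}^{k+1}(ix)|$ are bounded uniformly in $n$ and $x$.

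Finally, applying Stirling to (\ref{Ga}) gives
$$
\gamma_{n+1}^{-2}=\pi 2^{1-2k}\,n!\,\frac{\Gamma(n+2k+1)}{\Gamma(n+k+1)^2}\ \longrightarrow\ \pi 2^{1-2k},
$$
so $\gamma_n$ is uniformly bounded (and bounded away from zero). Putting everything together yields (\ref{eq1}); the cases $n=0$ and $n=1$ are handled directly via (\ref{13}). The main technical point is invoking the correct form of Szeg\H{o}'s estimate and tracking the $\Gamma$-factor asymptotics so that the $n$-dependent constants from (\ref{ultra}) and those from $\gamma_n$ cancel; everything else is routine.
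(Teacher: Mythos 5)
Your proposal is correct and takes essentially the paper's route: decompose $E_n^k(ix)$ via (\ref{jacobi})/(\ref{cases}) and reduce to the classical ultraspherical estimate, the only difference being that the paper uses the normalized bound (\ref{03}) together with $\|P_n^k(i\cdot)\|_{2,k}\le 1/\gamma_n$ and $\|\sin x\,P_{n-1}^{k+1}(ix)\|_{2,k}\le 1/\gamma_n$, whereas you use Szeg\H{o}'s pointwise bound plus Stirling asymptotics for the Gamma factors and for $\gamma_n$, which amounts to the same estimate with different bookkeeping. Two harmless slips: the factor for $P_{n-1}^{k+1}$ coming from (\ref{ultra}) is $(n-1)!/\Gamma(n+k)\sim n^{-k}$ rather than $(n-1)!/\Gamma(n+k+1)$ (your written version would only strengthen the bound), and Szeg\H{o}'s (7.33.5) is stated for $0<\lambda<1$, so for general $k$ one should instead invoke the uniform Jacobi bounds (7.32.5)--(7.32.6), from which the same inequality $|\sin^{\lambda}\theta\,C_n^{\lambda}(\cos\theta)|\le A\,n^{\lambda-1}$ follows for all $\lambda>0$.
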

\begin{proof}
A classical estimate for ultraspherical polynomials, as used in \cite{P}   asserts that
 for all  $x\in[-\pi,\pi]$ and $n\in \mathbb{N}$
 \begin{equation}\label{03}
 \frac{|\sin^k xP_n^k(ix)|}{\| P_n^k\|_{2,k}}=\frac{|\sin^k xC_n^k(\cos x)|}{\| C_n^k\|_{2,k}}\leq A
 \end{equation}
where $A$ is independent of $x$ and $n$. Moreover, since
$$\| P_n^k(ix)\|_{2,k}\leq 1/ \gamma_n$$
and for $n\geq 1$
$$| P_{n-1}^{k+1}(ix)\|_{2,k+1}=\|\sin x P_{n-1}^{k+1}(ix)\|_{2,k}\leq1/ \gamma_n$$
then estimate (\ref{eq1}) follows by using (\ref{jacobi}) and (\ref{03}).
\end{proof}
  \begin{lem}\label{lem11}
 Let $a,b\in(0,1)$. Then  the integral 
$$\int_{-\pi}^{\pi}
   \left|\cot\left(\frac{x-y}{2}\right)\left(\left|\dfrac{\sin x}{\sin y}\right|^{a}-\left|\dfrac{\sin x}{\sin y}\right|^{b}\right)\right|dy$$ 
 is uniformly bounded in $x\in [-\pi,\pi]$.
\end{lem}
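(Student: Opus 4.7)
The strategy is to exploit two complementary features of the integrand. First, the cotangent singularity at $y = x$ is cancelled by the fact that $\phi(t) := t^a - t^b$ vanishes at $t = 1$ (which corresponds to $|\sin y| = |\sin x|$). Second, the integrability of $|\sin y|^{-c}$ for $c < 1$ near the zeros $y = 0, \pm\pi$ controls the behaviour when $|\sin y|$ is small. I would carry out the proof via a three-region decomposition of $[-\pi,\pi]$, arranging the estimates so that each region contributes uniformly in $x$.

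Assume without loss of generality $a > b$, and set $\phi(t) = t^a - t^b$ for $t > 0$. I would record the three regime bounds
$$
|\phi(t)| \leq t^b \text{ for } t \leq 1, \qquad |\phi(t)| \leq t^a \text{ for } t \geq 1, \qquad |\phi(t)| \leq C_{a,b}\,|t-1| \text{ for } t \in [1/3,\,3],
$$
the last one holding because $\phi$ is $C^1$ with $\phi(1) = 0$. A key elementary identity is
$$
|\sin x - \sin y| = 2\bigl|\cos\tfrac{x+y}{2}\bigr|\,\bigl|\sin\tfrac{x-y}{2}\bigr| \leq 2\bigl|\sin\tfrac{x-y}{2}\bigr|,
$$
which, combined with $\bigl|\cot\tfrac{x-y}{2}\bigr| \cdot \bigl|\sin\tfrac{x-y}{2}\bigr| = \bigl|\cos\tfrac{x-y}{2}\bigr| \leq 1$, turns the cotangent singularity into a bounded factor whenever it is paired with $|\sin x - \sin y|$.

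Next, I would split $[-\pi,\pi] = R_0 \cup R_+ \cup R_-$, where $R_0 = \{y : |\sin x|/2 \leq |\sin y| \leq 2|\sin x|\}$, $R_+ = \{y : |\sin y| > 2|\sin x|\}$ and $R_- = \{y : |\sin y| < |\sin x|/2\}$. On $R_0$ the third bound on $\phi$ gives $|\phi| \leq C\bigl||\sin x| - |\sin y|\bigr|/|\sin y|$; combining with the cancellation above, the integrand is bounded by $C/|\sin y| \leq C'/|\sin x|$, while $|R_0| = O(|\sin x|)$ by a direct arc-length argument around the zeros of $\sin$, so the $R_0$-integral is $O(1)$. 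On $R_+$, the condition $|\sin y| \geq 2|\sin x|$ forces $\bigl|\sin\tfrac{x-y}{2}\bigr| \geq |\sin y|/4$ and hence $\bigl|\cot\tfrac{x-y}{2}\bigr| \leq 4/|\sin y|$; combined with $|\phi(t)| \leq t^b$ this yields an integrand of size $C|\sin x|^b/|\sin y|^{1+b}$, whose integral is $O(1)$ since $b < 1$ makes the $|\sin y|$-integral grow like $|\sin x|^{-b}$. Symmetrically, on $R_-$ one uses $|\phi(t)| \leq t^a$ together with $\bigl|\cot\tfrac{x-y}{2}\bigr| \leq 4/|\sin x|$ (derived from $|\sin y| \leq |\sin x|/2$ by the same argument) and the integrability of $|\sin y|^{-a}$ over the small set $R_-$, which is $O(|\sin x|^{1-a})$ since $a < 1$.

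The main obstacle is the regime $|\sin x| \to 0$, where singular factors appear in every region: powers of $|\sin y|^{-1}$ in $R_+$ and $R_-$, and cotangent of size $1/|\sin x|$ in $R_0$ and $R_-$. The substance of the proof is the quantitative bookkeeping showing that these blow-ups cancel exactly, each region delivering an $O(1)$ bound thanks to the strict inequalities $a, b < 1$ combined with the vanishing $\phi(1) = 0$.
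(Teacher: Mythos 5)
Your argument is correct, but it is genuinely different from the paper's. The paper does not estimate the kernel directly: it rewrites $\cot\left(\frac{x-y}{2}\right)=-\frac{\sin x+\sin y}{\cos x-\cos y}$, splits off the factors $\sin y$ and $\sin x$, substitutes $s=\cos x$, $t=\cos y$, and then simply invokes the uniform boundedness of the resulting integrals $\int_{-1}^{1}|s-t|^{-1}\left|\left(\frac{1-s^2}{1-t^2}\right)^{a/2}-\left(\frac{1-s^2}{1-t^2}\right)^{b/2}\right|dt$ (and the analogous one with exponents shifted by $1/2$) from Pollard's earlier work \cite{PP}; so the paper's proof is essentially a two-line reduction to a known result. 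You instead give a self-contained proof: the three-region split according to the size of $|\sin y|/|\sin x|$, the Lipschitz bound $|t^a-t^b|\le C|t-1|$ near $t=1$ paired with $|\sin x-\sin y|=2\left|\cos\frac{x+y}{2}\right|\left|\sin\frac{x-y}{2}\right|$ to kill the cotangent singularity, and the lower bounds $\left|\sin\frac{x-y}{2}\right|\ge \frac{1}{4}\max(|\sin x|,|\sin y|)$ off the comparable region, followed by the measure/integral computations $|R_0|=O(|\sin x|)$, $\int_{R_+}|\sin y|^{-1-b}dy=O(|\sin x|^{-b})$, $\int_{R_-}|\sin y|^{-a}dy=O(|\sin x|^{1-a})$ — all of which I checked are correct (note in $R_+$ what you really use is $b>0$, not $b<1$, but both hold by hypothesis). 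A pleasant side effect of your route is that it never assumes the cotangent's singular point is $y=x$, so the endpoint singularity at $x-y=\pm 2\pi$ (e.g.\ $x$ near $\pi$, $y$ near $-\pi$) is covered by the same inequalities; and in effect your estimates reprove, in this special case, the external bound the paper cites, so your version buys self-containedness at the cost of the bookkeeping, while the paper's buys brevity at the cost of depending on \cite{PP}.
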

 
  \begin{proof}
 First, recall the identity
  $$\cot\left(\frac{x-y}{2}\right)= -
 \dfrac{\sin x+\sin y}{\cos x-\cos y}.$$
By symmetry, we may reduce the integral to $y\in[0,\pi]$ and estimate
\begin{eqnarray*}
&&\int_{-\pi}^{\pi}
   \left|\cot\left(\frac{x-y}{2}\right)\left(\left|\dfrac{\sin x}{\sin y}\right|^{a}-\left|\dfrac{\sin x}{\sin y}\right|^{b}\right)\right|dy
  \\&&\qquad\qquad\qquad\leq
  2\int_{0}^{\pi}
 \left|\left|\dfrac{\sin x}{\sin y}\right|^{a}-\left|\dfrac{\sin x}{\sin y}\right|^{b}\right||\cos x-\cos  y|^{-1}\sin ydy
 \\&&\qquad\qquad\qquad\qquad+ 2|\sin x| \int_{0}^{\pi}
 \left|\left|\dfrac{\sin x}{\sin y}\right|^{a}-\left|\dfrac{\sin x}{\sin y}\right|^{b}\right||\cos x-\cos  y|^{-1}dy.
\end{eqnarray*}

Next, make the substitution  $s=\cos  x$ and $t=\cos y$ the  integrals become 
  
  $$  \int_{-1}^{1}|s-t|^{-1}\left|\left(\frac{1-s^2}{1-t^2}\right)^{a/2}
  \left(\frac{1-s^2}{1-t^2}\right)^{b/2}\right|dt $$
and 
$$  \int_{-1}^{1}|s-t|^{-1}\left|\left(\frac{1-s^2}{1-t^2}\right)^{(a+1)/2}
  \left(\frac{1-s^2}{1-t^2}\right)^{(b+1)/2}\right|dt .$$
These integrals are known to be uniformly bounded for  $a,b\in(0,1)$, (see \cite{PP})
 
  \end{proof}
 
\begin{lem}\label{dua}
Let $f\in L^p([-\pi,\pi],dm_k)$ be nonnegative function . Then the integral
$$T(f)(x)=\int_{-\pi}^{\pi}
   f(y)\left|\cot\left(\frac{x-y}{2}\right)\left(\left(\dfrac{\sin x}{\sin y}\right)^{k(2-p)/p}-1\right)\right|dm_k(y)$$ 
define a function in $L^p([-\pi,\pi],dm_k)$ for 
$$2-\frac{1}{k+1}<p<2+\frac{1}{k}.$$
\end{lem}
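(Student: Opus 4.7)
The strategy is to apply Schur's test for the positive kernel of $T$ with a power weight $h(x)=|\sin x|^{\beta}$, matching each Schur integral to the uniform bound of Lemma~\ref{lem11}. Setting $\alpha=k(2-p)/p$, the operator $T$ has kernel
\[
K(x,y)=\bigl|\cot((x-y)/2)\bigr|\,\bigl|(\sin x/\sin y)^{\alpha}-1\bigr|
\]
with respect to $dm_{k}\otimes dm_{k}$. A direct computation, using $|\cot((x-y)/2)|=|\cot((y-x)/2)|$, yields the duality relation $T_{\alpha}^{\ast}=T_{-\alpha}$, so it suffices to treat the sub-range $p\in(2-1/(k+1),2]$, where $\alpha\ge 0$, and recover $p\in[2,2+1/k)$ by passing to the adjoint.

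\textbf{Reduction to Lemma~\ref{lem11}.} For $h(x)=|\sin x|^{\beta}$, Schur's two conditions read
\[
\int K(x,y)\,|\sin y|^{\beta p'}\,dm_{k}(y)\le A\,|\sin x|^{\beta p'},\qquad\int K(x,y)\,|\sin x|^{\beta p}\,dm_{k}(x)\le A\,|\sin y|^{\beta p}.
\]
Writing $|\sin y|^{\beta p'}=|\sin x|^{\beta p'}(\sin y/\sin x)^{\beta p'}$ and pulling $|\sin x|^{\beta p'}$ out of the bracket, the first condition becomes
\[
\int_{-\pi}^{\pi}\bigl|\cot((x-y)/2)\bigr|\,\bigl|(\sin x/\sin y)^{a}-(\sin x/\sin y)^{b}\bigr|\,dm_{k}(y)\le A
\]
with $a=\alpha-\beta p'$, $b=-\beta p'$. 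Since $|\sin y|^{2k}\le 1$, this follows from Lemma~\ref{lem11} as soon as $a,b\in(0,1)$. An identical manipulation reduces the second Schur condition to the same integrand with exponents $(a'',b'')=(\alpha+\beta p,\beta p)$ integrated against $dm_{k}(x)$; after a change of variable $x\leftrightarrow y$, this is again handled by Lemma~\ref{lem11} provided $a'',b''\in(-1,0)$.

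\textbf{Admissible $\beta$ and range of $p$.} For $\alpha>0$ the sign constraints translate into $\beta\in((\alpha-1)/p',0)$ from the first condition and $\beta\in(-1/p,-\alpha/p)$ from the second. Their intersection is non-empty iff $(\alpha-1)/p'<-\alpha/p$, which by $1/p+1/p'=1$ simplifies to $\alpha<1/p'$. Substituting $\alpha=k(2-p)/p$ this reads $k(2-p)<p-1$, i.e.
\[
p>\frac{2k+1}{k+1}=2-\frac{1}{k+1}.
\]
Combined with the duality $T_{\alpha}^{\ast}=T_{-\alpha}$ applied to $p'$, the case $p>2$ yields the symmetric upper endpoint $p<2+1/k$, completing the required range.

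\textbf{Main obstacle.} The decisive subtlety is that Lemma~\ref{lem11} insists the two exponents in the difference have the same sign, so the naive choice $\beta=0$ is forbidden and the two Schur conditions pull $\beta$ from opposite directions (one forcing $\beta<0$, the other trapping $\beta$ inside $(-1/p,-\alpha/p)$). Ensuring that these two admissible intervals of $\beta$ actually intersect is precisely what produces the sharp lower endpoint $p=2-1/(k+1)$, and by duality the upper endpoint $p=2+1/k$, as stated.
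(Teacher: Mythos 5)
Your argument is correct and is in substance the same as the paper's: the paper's proof is exactly a weighted Schur/H\"older duality estimate in which the kernel is multiplied by a power of $\left|\frac{\sin y}{\sin x}\right|$ and both resulting integrals are bounded uniformly via Lemma \ref{lem11}, with the exponent constraints producing precisely the range $2-\frac{1}{k+1}<p<2+\frac{1}{k}$. The only difference is that the paper fixes the symmetric weight exponents $-1/p$ and $1/p'$ (one specific admissible value of your $\beta$), which covers both halves of the range at once, whereas you keep $\beta$ free and recover $p>2$ through the adjoint relation $T_{\alpha}^{\ast}=T_{-\alpha}$.
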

 \begin{proof}
We use duality. Let $g\in L^{p'}([-\pi,\pi],dm_k)$  be non-negative,where $1/p+1/p'=1$. By Hölder’s inequality, we have

\begin{eqnarray*}
&&\int_{-\pi}^{\pi} g(x)T(f)(x)dm_k(x)\\&=&\int_{-\pi}^{\pi}\int_{-\pi}^{\pi}
  g(x) f(y)\left|\cot\left(\frac{x-y}{2}\right)\left(\left(\dfrac{\sin x}{\sin y}\right)^{k(2-p)/p}-1\right)\right|dm_k(y)dm_k(x)\\
  &\leq& \left\{\int_{-\pi}^{\pi}g^{p'}(x)\left(\int_{-\pi}^{\pi}
  \left|\cot\left(\frac{x-y}{2}\right)\left(\left(\dfrac{\sin x}{\sin y}\right)^{k(2-p)/p}-1\right)\right|\left|\frac{\sin y}{\sin x}\right|^{-1/p}dm_k(y)\right)dm_k(x)\right\}^{1/p'}
\\&& \times \left\{\int_{-\pi}^{\pi} f^{p}(y)\left(\int_{-\pi}^{\pi}
  \left|\cot\left(\frac{x-y}{2}\right)\left(\left(\dfrac{\sin x}{\sin y}\right)^{k(2-p)/p}-1\right)\right|\left|\frac{\sin y}{\sin x}\right|^{1/p'}dm_k(x)\right)dm_k(y)\right\}^{1/p}.
\end{eqnarray*}
Now Lemma \ref{lem11} implies that 
 $$ \sup_{x\in[-\pi,\pi]}\int_{-\pi}^{\pi}
  \left|\cot\left(\frac{x-y}{2}\right)\left(\left(\dfrac{\sin x}{\sin y}\right)^{k(2-p)/p}-1\right)\right|\left|\frac{\sin y}{\sin x}\right|^{-1/p}dm_k(y)<\infty$$
 and 
$$  \sup_{y\in[-\pi,\pi]}\int_{-\pi}^{\pi}
  \left|\cot\left(\frac{x-y}{2}\right)\left(\left(\dfrac{\sin x}{\sin y}\right)^{k(2-p)/p}-1\right)\right|\left|\frac{\sin y}{\sin x}\right|^{1/p'}dm_k(x)<\infty.$$
   Consequently,
$$\int_{-\pi}^{\pi} g(x)T(f)(x)dm_k(x)\leq A \|g\|_{p',k} \|f\|_{p,k}.$$
Taking the supremum over all such $g$ yields $T(f)\in L^p([-\pi,\pi],dm_k)$.
\end{proof}
\begin{lem}
If $f\in  L^p([-\pi,\pi],dm_k)$, $2-\frac{1}{k+1}<p<2+\frac{1}{k}$ then the sequence $(a_n/\gamma_n)_{n\in \mathbb{Z}}$ defined by (\ref{an}) is bounded 
\end{lem}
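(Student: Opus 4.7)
The plan is to estimate $|a_n/\gamma_n|$ directly by Hölder's inequality combined with the uniform pointwise bound (\ref{eq1}).

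First I would rewrite the definition (\ref{an}) in the form
\[\frac{a_n}{\gamma_n}=\gamma_n\int_{-\pi}^{\pi}f(x)\,E_n^k(-ix)\,dm_k(x),\]
and apply Hölder's inequality with dual exponent $p'=p/(p-1)$ to obtain
\[\left|\frac{a_n}{\gamma_n}\right|\leq \|f\|_{p,k}\,\gamma_n\,\|E_n^k(-i\,\cdot)\|_{p',k}.\]
The entire problem is thus reduced to showing that $\gamma_n\|E_n^k(-i\,\cdot)\|_{p',k}$ is bounded uniformly in $n$.

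Next I would invoke the pointwise estimate (\ref{eq1}). After the change of variable $x\mapsto -x$ (which leaves $|\sin x|^k$ invariant), and using (\ref{45}) together with the bound $\gamma_{-n}\leq C_k\,\gamma_n$ coming from (\ref{69}) in order to cover negative indices, one has $\gamma_n|E_n^k(-ix)|\leq A|\sin x|^{-k}$ for every $n\in\mathbb{Z}$ and every $x\in[-\pi,\pi]$. Raising to the $p'$-th power and integrating against $dm_k$ yields
\[\gamma_n^{p'}\|E_n^k(-i\,\cdot)\|_{p',k}^{p'}\leq A^{p'}\int_{-\pi}^{\pi}|\sin x|^{-kp'+2k}\,dx= A^{p'}\int_{-\pi}^{\pi}|\sin x|^{k(2-p')}\,dx.\]

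Finally I would verify that this last integral is finite throughout the assumed range of $p$. The endpoints $p_0=2-\tfrac{1}{k+1}$ and $p_1=2+\tfrac{1}{k}$ are mutually conjugate, so the hypothesis $p\in(p_0,p_1)$ is equivalent to $p'\in(p_0,p_1)$; in particular $p'<2+\tfrac{1}{k}$, which is exactly the integrability condition $k(2-p')>-1$ near $x=0,\pm\pi$. This produces a bound on $|a_n/\gamma_n|$ depending only on $\|f\|_{p,k}$ and the fixed constants $A,k,p$, which is independent of $n$. The structural core of the argument is this exponent check, and I expect no serious obstacle beyond the minor bookkeeping required to extend (\ref{eq1}) to negative $n$ via (\ref{45}) and (\ref{69}).
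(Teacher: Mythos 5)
Your proposal is correct and takes essentially the same route as the paper: Hölder's inequality, the uniform estimate (\ref{eq1}), and the integrability check $k(2-p')>-1$, which is exactly the condition $p>2-\frac{1}{k+1}$ used there. Your additional care in extending (\ref{eq1}) to negative indices via (\ref{45}) and (\ref{69}) is a detail the paper leaves implicit, but it does not change the argument.
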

\begin{proof}
Applying Hölder’s inequality yields
$$|a_n/\gamma_n|\leq  \left\{\int_{-\pi}^{\pi}|f(y)|^p|\sin x|^{2k}dx\right\}^{1/p} \left\{\int_{-\pi}^{\pi}|\gamma_n E_n^k(ix)|^{p'}  |\sin x|^{2k}dx\right\}^{1/p'}$$
By symmetry, we write
$$\int_{-\pi}^{\pi}|\gamma_n E_n^k(ix)|^{p'}  |\sin x|^{2k}dx=4\int_{0}^{\pi/ 2}|\gamma_n E_n^k(ix)|^{p'}  |\sin x|^{2k}dx $$
Using  (\ref{eq1}) , we obtain
$$\int_{0}^{\pi/ 2}|\gamma_n E_n^k(ix)|^{p'}  |\sin x|^{2k}dx \leq  A\int_{0}^{\pi/ 2} |\sin x|^{2k-kp'}dx.$$
The latter integral converges if and only if
$$2k-kp'>-1$$ 
which is equivalent to 
$$p>2-\frac{1}{k+1},$$
this is exactly in the assumed range of $p$. Therefore, we  have that
$$|a_n/\gamma_n|\leq A \|f\|_{p,k}, \qquad \text{for all }\; n\in \mathbb{Z}$$
and the proof is thus complete.
\end{proof}
\begin{lem}
Let $f\in  L^p([-\pi,\pi],dm_k)$, $2-\frac{1}{k+1}<p<2+\frac{1}{k}.$ Then 
$$\limsup_{N\rightarrow+\infty}\|S_N(f)\|_{p,k}<\infty$$
\end{lem}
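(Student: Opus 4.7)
My approach is to establish the stronger uniform bound $\|S_N f\|_{p,k}\le A\|f\|_{p,k}$, from which $\limsup_N\|S_N f\|_{p,k}<\infty$ follows immediately. Starting from (\ref{K}), I split $K_N=K_N^{I}+K_N^{II}$, where $K_N^{I}$ is the cotangent piece and
\[
K_N^{II}(x,y)=-\gamma_{N+1}^2\bigl[P_{N+1}^k(ix)P_{N+1}^k(iy)+4\sin x\sin y\,P_N^{k+1}(ix)P_N^{k+1}(iy)\bigr]
\]
is a rank-two remainder. The two pieces will be analyzed separately.

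\textbf{The regular part.} $S_N^{II}$ decomposes into a sum of two rank-one operators of the form $f\mapsto \gamma_{N+1}^2\,\Phi(x)\int_{-\pi}^{\pi}f\,\Phi\,dm_k$, with $\Phi\in\{P_{N+1}^k(i\cdot),\,\sin(\cdot)P_N^{k+1}(i\cdot)\}$. From (\ref{eq1}), together with (\ref{jacobi}) to isolate the two constituents, one extracts $\bigl|\,|\sin x|^{k}\Phi(x)\bigr|\le A/\gamma_{N+1}$, and therefore
\[
\|\gamma_{N+1}\Phi\|_{q,k}^{q}\le A^{q}\int_{-\pi}^{\pi}|\sin x|^{2k-kq}\,dx,
\]
which is finite for both $q=p$ and $q=p'$ exactly when $p\in(2-1/(k+1),\,2+1/k)$. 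H\"older's inequality then gives $\gamma_{N+1}\bigl|\int f\,\Phi\,dm_k\bigr|\le A\|f\|_{p,k}$, so each rank-one summand has norm $\le A$ on $L^p(dm_k)$, uniformly in $N$.

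\textbf{The singular part.} Introduce the auxiliary functions $\phi_N(y)=\gamma_{N+1}|\sin y|^{k}P_{N+1}^k(iy)$ and $\psi_N(x)=\gamma_{N+1}|\sin x|^{k+1}P_N^{k+1}(ix)$, both uniformly bounded by $A$ in view of (\ref{eq1}). Up to a fixed sign convention on $\sin x,\sin y$, the singular kernel becomes
\[
K_N^{I}(x,y)=\frac{2\cot\!\bigl(\tfrac{x-y}{2}\bigr)}{|\sin x|^{k}|\sin y|^{k}}\bigl[\psi_N(x)\phi_N(y)-\phi_N(x)\psi_N(y)\bigr],
\]
so that $S_N^{I}f(x)=\tfrac{2\psi_N(x)}{|\sin x|^{k}}\,\widetilde H\bigl(\phi_N f|\sin|^{k}\bigr)(x)-\tfrac{2\phi_N(x)}{|\sin x|^{k}}\,\widetilde H\bigl(\psi_N f|\sin|^{k}\bigr)(x)$, where $\widetilde H g(x)=\mathrm{p.v.}\int_{-\pi}^{\pi}\cot(\tfrac{x-y}{2})g(y)\,dy$ is the periodic conjugate-function operator. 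Using $|\phi_N|,|\psi_N|\le A$, each summand satisfies
\[
\Bigl\|\tfrac{\psi_N}{|\sin|^{k}}\,\widetilde H(\phi_N f|\sin|^{k})\Bigr\|_{p,k}^{\,p}\le A^{p}\int_{-\pi}^{\pi}\bigl|\widetilde H(\phi_N f|\sin|^{k})(x)\bigr|^{p}|\sin x|^{2k-kp}\,dx,
\]
and the Muckenhoupt $A_p$ condition for the power weight $|\sin x|^{2k-kp}$, namely $-1<2k-kp<p-1$, is precisely $2-1/(k+1)<p<2+1/k$; hence $\widetilde H$ is bounded on $L^p(|\sin|^{2k-kp}dx)$, and the right side is controlled by $A^{2p}\|f\|_{p,k}^{p}$.

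\textbf{Conclusion and main obstacle.} Combining the two bounds yields $\|S_N f\|_{p,k}\le A\|f\|_{p,k}$ uniformly in $N$, whence $\limsup_N\|S_N f\|_{p,k}<\infty$. The decisive step is the singular part: the factor $|\sin x|^{-k}$ outside $\widetilde H$ rules out any direct appeal to the unweighted Hilbert-transform inequality, and the whole scheme closes only because this factor can be absorbed into the weight $|\sin x|^{2k-kp}$, whose $A_p$ range coincides exactly with the prescribed range of $p$. The companion bound of Lemma \ref{dua} encodes the same commutator mechanism and can equivalently be substituted for the Muckenhoupt theorem at this point.
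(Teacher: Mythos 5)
Your proof is correct, but the heart of it runs along a genuinely different line from the paper's. For the rank-two remainder $K_N^{II}$ you essentially reproduce the paper's treatment (the paper bounds the coefficients $a_{N+1}/\gamma_{N+1}$ by H\"older and the estimate (\ref{eq1}), then uses that $\||\sin|^{-k}\|_{p,k}<\infty$ for $p<2+1/k$; your rank-one computation with $\|\gamma_{N+1}\Phi\|_{q,k}$ for $q=p,p'$ is the same thing, and isolating the two constituents from (\ref{eq1}) via (\ref{jacobi}) is legitimate since $P_{N+1}^k(ix)$ and $\sin x\,P_N^{k+1}(ix)$ are the real and imaginary parts of $E_{N+1}^k(ix)$). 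The difference is in the singular part: the paper, following Pollard, passes to Lebesgue measure, splits the weight as $|\sin x|^{k(2-p)/p}=1+\bigl(|\sin x/\sin y|^{k(2-p)/p}-1\bigr)\cdot|\sin y|^{k(2-p)/p}$, handles the first piece with the \emph{unweighted} conjugate-function theorem of M.~Riesz \cite{R}, and controls the commutator piece by the elementary Schur/duality estimates of Lemmas \ref{lem11} and \ref{dua}; you instead absorb the factor $|\sin x|^{-k}$ entirely into the weight $|\sin x|^{2k-kp}$ and invoke the weighted boundedness of the conjugate function for this $A_p$ power weight (Hunt--Muckenhoupt--Wheeden), whose admissibility condition $-1<k(2-p)<p-1$ reproduces exactly the range $2-\tfrac1{k+1}<p<2+\tfrac1k$. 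Your route is shorter, yields the uniform operator bound $\|S_Nf\|_{p,k}\le A\|f\|_{p,k}$ directly, and makes transparent why the range of $p$ is what it is; the paper's route is more elementary and self-contained, using only classical pre-$A_p$ tools (which is in the spirit of Pollard's original argument), with Lemma \ref{dua} playing the role your weighted inequality plays. Two small points you rightly gloss but should keep in mind: the individual cotangent integrals must be read as principal values (the cancellation at $y=x$ is present only in the full antisymmetric kernel), and the factors $\operatorname{sgn}(\sin x)$, $\operatorname{sgn}(\sin y)$ arising when you replace $\sin$ by $|\sin|$ must be carried into $\phi_N,\psi_N$ or into the function being transformed; neither affects the estimates.
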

\begin{proof}
By formulas (\ref{SK}) and  (\ref{K}) the partial sums
 $S_N(f)$ admit the representation
 \begin{eqnarray*}
 S_N(f)(x)&&=-2\gamma_{N+1}^2P_{N+1}^k(ix)\int_{-\pi}^\pi\cot\left(\frac{x-y}{2}\right)
   \sin yP_{N}^{k+1}(iy)f(y) dm_k(y)\\
   &&+2\gamma_{N+1}^2 \sin xP_{N}^{k+1}(ix)\int_{-\pi}^\pi\cot\left(\frac{x-y}{2}\right) P_{N+1}^k(iy)f(y) dm_k(y)\\&&-\left(\frac{a_{N+1}+\overline{a_{N+1}}}{2}\right)P_{N+1}^k(ix)
+i(a_{N+1}-\overline{a_{N+1}})\sin xP_{N}^{k+1}(ix). 
 \end{eqnarray*}
With the  estimate 
(\ref{eq1}),it follows that for $x\neq -\pi,0,\pi$,
  
  $$|S_N(f)(x)|\leq A\gamma_{N+1}\left|\sin^{-k}(x)\int_{-\pi}^\pi\cot\left(\frac{x-y}{2}\right)
   \sin yP_{N}^{k+1}(iy)f(y) dm_k(y)\right|$$ 
   $$+  A\gamma_{N+1}\left|\sin^{-k}(x)\int_{-\pi}^\pi\cot\left(\frac{x-y}{2}\right)
   P_{N+1}^{k}(iy)f(y) dm_k(y)\right|+ A|\sin^{-k}(x)|$$
  Taking the 
$L^p([-\pi,\pi],dm_k)$-norm, we obtain
$$\|S_N(f)(x)\|_{p,k}\leq I_N+J_N+A$$

where 

 $$I_N=
 A\gamma_{N+1}\left\|\sin^{-k}(x)\int_{-\pi}^\pi\cot\left(\frac{x-y}{2}\right)
   \sin yP_{N}^{k+1}(iy)f(y) dm_k(y)\right\|_{p,k}$$
 and 
 $$J_N= A\gamma_{N+1}\left\|\sin^{-k}(x)\int_{-\pi}^\pi\cot\left(\frac{x-y}{2}\right)
   P_{N+1}^{k}(iy)f(y) dm_k(y)\right\|_{p,k}.$$
 We focus on $I_N$ the estimate for $J_N$	 is analogous. Writing the	
 in terms of the Lebesgue measure $dx$, we obtain
  $$I_N=
 A\gamma_{N+1}\left\|\sin^{\frac{2k}{p}-k}(x)\int_{-\pi}^\pi\cot\left(\frac{x-y}{2}\right)
   \sin y|\sin y|^{2k}P_{N}^{k+1}(iy)f(y) dy\right\|_{p}.$$
 We decompose the integrand as
 \begin{eqnarray*}
 &&\gamma_{N+1}|\sin x|^{\frac{2k}{p}-k}|\sin y|^{2k}
   \sin yP_{N}^{k+1}(iy)f(y) = \gamma_{N+1}|\sin y|^{\frac{2k}{p}+k}
  \sin y P_{N}^{k+1}(iy)f(y)
   \\&& \qquad\qquad +\gamma_{N+1}|\sin y|^{\frac{2k}{p}+k}\sin y
   P_{N}^{k+1}(iy)f(y)\left\{ \left|\dfrac{\sin x}{\sin y}\right|^{\frac{2k}{p}-k}-1\right\}.
 \end{eqnarray*}
 Then 
 $$I_N\leq \left\|\int_{-\pi}^\pi\cot\left(\frac{x-y}{2}\right)
 \Big\{\gamma_{N+1}|\sin y|^{\frac{2k}{p}+k}
  \sin y P_{N}^{k+1}(iy)f(y)dy \Big\}\right \|_{p}$$
 $$+\left\|\int_{-\pi}^\pi\cot\left(\frac{x-y}{2}\right)
 \gamma_{N+1}|\sin y|^{\frac{2k}{p}+k}
  \sin y P_{N}^{k+1}(iy)f(y)\left\{ \left|\dfrac{\sin x}{\sin y}\right|^{k(2-p)/p}-1\right\}\right \|_{p}$$
 We  refer to the paper \cite{R} for the estimate
$$ \left\|\int_{-\pi}^\pi\cot\left(\frac{x-y}{2}\right)
 \Big\{\gamma_{N+1}|\sin y|^{\frac{2k}{p}+k}
  \sin y P_{N}^{k+1}(iy)f(y)dy \Big\}\right \|_{p}$$$$\leq A \left\| \gamma_{N+1}
 |\sin y|^{\frac{2k}{p}+k}
  \sin y P_{N}^{k+1}(iy)f(y)\right \|_{p}\leq A \|f\|_{p,k}. $$
  The last inequality follows from 
(\ref{eq1}).
In addition Lemma \ref{dua} with (\ref{eq1}) yield 
 $$\left\|\int_{-\pi}^\pi\cot\left(\frac{x-y}{2}\right)
 \gamma_{N+1}|\sin y|^{\frac{2k}{p}+k}
  \sin y P_{N}^{k+1}(iy)f(y)\left\{ \left|\dfrac{\sin x}{\sin y}\right|^{k(2-p)/p}-1\right\}\right\|_{p}$$
 $$\leq A \left\|\int_{-\pi}^\pi\left|\cot\left(\frac{x-y}{2}\right)
 |\sin y|^{\frac{2k}{p}}
 | f(y)|\left( \left|\dfrac{\sin x}{\sin y}\right|^{k(2-p)/p}-1\right)\right|\right\|_{p}<\infty.$$
 This completes the proof.
\end{proof}
 We now arrive at the main result. As pointed out at the beginning, by combining the preceding lemmas and closely following the argument of Theorem 5.1 in \cite[p. 361]{P}, we establish Theorem \ref{MR}.
\par \par Finally, we provide a counterexample for the case $$p\notin \left(2-\dfrac{1}{k+1}, 2+\frac{1}{k}\right)).$$ 
We consider the same counterexample as  in  \cite[ §9]{P} which demonstrates the failure of the 
 $L^p$ -boundedness outside the above range of $p$.
 Let $$f(x)=(1-\cos x)^{-(k+1)/2}$$
It is easy to see that 
  $f\in L^p([-\pi,\pi],dm_k)$. It therefore suffices to show that
$$\| a_n E_n^k(ix)+a_{-n}E_{-n}(ix)\|_{p,k}$$  
does not tend to $0$ as  $n\rightarrow\infty$. In fact,
 for $n\geq 0$ we have 
$$a_n=\gamma_n^2 \int_{-\pi}^\pi f(x)E_n^k(-ix) |\sin x|^{2k}dx=
2\gamma_n^2\int_{0}^\pi f(x)P_n^k(ix) |\sin x|^{2k}dx, $$
 and by (\ref{cases})
$$a_{-n}=\gamma_{-n}^2 \int_{-\pi}^\pi f(x)E_{-n}^k(-ix) |\sin x|^{2k}dx=
2\gamma_{n+1}^2\dfrac{n+2k}{n+k}\int_{0}^\pi f(x)P_n^k(ix) |\sin x|^{2k}dx, $$
$$=\dfrac{n+k}{n} a_n$$
Now using (\ref{45}) 
$$\| a_n E_n^k(ix)+a_{-n}E_{-n}(ix)\|_{p,k}=2a_n\dfrac{n+k}{n}\|P_n^k(ix)\|_{p,k}\geq A$$
for some constant $A>0$ independent of $n$.
This follows by expressing $P_n^k$ in terms of ultraspherical polynomials via 
  (\ref{ultra})  and using \cite[(9.1)]{P}. The counterexample is therefore proved.

 \end{document}